\documentclass[a4paper,12pt]{article}
\usepackage{amsmath, amsthm, amstext}
\usepackage{amssymb, array, amsfonts}
\usepackage[cp1251,cp866]{inputenc}
\usepackage[english]{babel}
\usepackage{bbding}
\inputencoding{cp1251}

\setlength{\topmargin}{-1cm} \setlength{\textheight}{23.5cm}
\setlength{\oddsidemargin}{-0.5cm}
\setlength{\evensidemargin}{-0.5cm} \setlength{\textwidth}{17cm}

\renewcommand{\l}{\left}
\renewcommand{\r}{\right}

\def \C{\mathbb{C}}
\def \N{\mathbb{N}}

\newcommand{\beq}{\begin{equation}}
\newcommand{\eeq}{\end{equation}}

\theoremstyle{plain}
\newtheorem{theorem}{\bf Theorem}
\newtheorem{lemma}[theorem]{\bf Lemma}

\theoremstyle{definition}

\theoremstyle{remark}

\renewcommand{\le}{\leqslant}
\renewcommand{\ge}{\geqslant}
\renewcommand{\mod}{\mathop{\mathrm{mod}}\nolimits}
\newcommand{\tr}{\mathop{\mathrm{tr}}\nolimits}
\newcommand{\diag}{\mathop{\mathrm{diag}}\nolimits}

\title{A Hilbert-Schmidt analog of Huaxin Lin's Theorem}

\author{N.~Filonov$^*$, I.~Kachkovskiy\thanks{St.~Petersburg Department of  V.~A.~Steklov Institute of Mathematics, Russian Academy of  Sciences.}}
\date{}
\begin{document}
\maketitle

The paper is devoted to the following question: 
consider two self-adjoint $n\times n$-matrices 
$H_1,H_2$, $\|H_1\|\le 1$, $\|H_2\|\le 1$, 
such that their commutator $[H_1,H_2]$ is small in some sense. 
Do there exist such self-adjoint commuting matrices $A_1,A_2$, 
such that $A_i$ is close to $H_i$, $i=1,2$?
The answer to this question is positive if the smallness 
is considered with respect to the operator norm. 
The following result was established by Huaxin Lin in \cite{L}: 
if $\|[H_1,H_2]\|=\delta$, then we can choose $A_i$ 
such that $\|H_i-A_i\|\le C(\delta)$, $i=1,2$, 
where $C(\delta)\to 0$ as $\delta\to 0$. 
Notice that $C(\delta)$ does not depend on $n$. 
The proof was simplified by 
Friis and R{\o}rdam in \cite{FR}. 
A quantitative version of the result 
with $C(\delta)=E(1/\delta)\delta^{1/5}$, 
where $E(x)$ grows slower than any power of $x$, 
was recently established by Hastings in \cite{H}.

We are interested in the same question, 
but with respect to the normalized Hilbert-Schmidt norm: 
for $A\in M_n(\C)$, $A=\{a_{ij}\}_{i,j=1}^n$, let
$$
\|A\|_{\tr}^2=\frac{1}{n}\sum\limits_{i,j=1}^n |a_{ij}|^2.
$$
An analog of Lin's theorem for this norm was established in 
\cite{H1,H2} (in a big generality, and also for the case of $n$ operators) and independently in \cite{FS}. 
A quantitative version with $C(\delta)=12\delta^{1/6}$, 
where $\delta=\|[H_1,H_2]\|_{\tr}$, was obtained by Glebsky in \cite{Gl}. 
In the present paper, we use the same ideas to prove a similar result 
with $C(\delta)=2\delta^{1/4}$, see Theorem \ref{main} below. We also consider the case of $n$ operators in a way similar to \cite{Gl}, see Theorem \ref{multi}.

\begin{lemma}
\label{l1}
Let $-1\le \lambda_1\le\ldots\le \lambda_n\le 1$. 
Then for any $k,m\in\N$ there exists a partition
$$
\{1,\ldots,n\}=J\cup\bigcup_{a=-m}^m L_a
$$
such that
\begin{enumerate}
\item
$\#J\le \frac{n}{k}$.
\item
$|\lambda_i-\lambda_j|<\frac{1}{m}$, $i,j\in L_a$.
\item
$|\lambda_i-\lambda_j|\ge \frac{1}{km}$, $i\in L_a$, $j\in L_b$, $a\neq b$.
\end{enumerate}
\end{lemma}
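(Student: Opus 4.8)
The plan is to build the partition from a grid on the real line of mesh $1/m$ whose phase is chosen by a pigeonhole argument, the sets $L_a$ being whatever survives inside the grid cells after a thin slab is deleted from each. Fix $k,m\in\N$ and write $\varepsilon=1/(km)$. For each $l\in\{0,1,\dots,k-1\}$ tile $\mathbb{R}$ by the half-open cells $B^{(l)}_j=[\,l\varepsilon+(j-1)/m,\ l\varepsilon+j/m\,)$, $j\in\mathbb{Z}$, and inside each cell distinguish the initial slab $G^{(l)}_j=[\,l\varepsilon+(j-1)/m,\ l\varepsilon+(j-1)/m+\varepsilon\,)$ of length $\varepsilon$ and the remainder $P^{(l)}_j=B^{(l)}_j\setminus G^{(l)}_j$, an interval of length $1/m-\varepsilon$. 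A one-line computation with integer parts shows that every real $x$ lies in exactly one slab as $(l,j)$ runs over $\{0,\dots,k-1\}\times\mathbb{Z}$: from $x\in G^{(l)}_j$ one gets $l\equiv\lfloor kmx\rfloor\pmod k$, which determines $l$, and then $j$ is determined too.

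Accordingly, set $l(i):=\lfloor km\lambda_i\rfloor\bmod k$ for $i=1,\dots,n$. The $k$ sets $\{\,i:l(i)=l\,\}$, $l=0,\dots,k-1$, partition $\{1,\dots,n\}$, so some $l^\ast$ satisfies $\#\{\,i:l(i)=l^\ast\,\}\le n/k$; fix it and drop superscripts, writing $B_j,G_j,P_j$ for the pieces of the grid with phase $l^\ast\varepsilon$. Put $J:=\{\,i:\lambda_i\in\bigcup_j G_j\,\}=\{\,i:l(i)=l^\ast\,\}$, which already yields (1). Each $i\notin J$ has $\lambda_i$ in exactly one piece $P_j$; and since all $\lambda_i$ lie in $[-1,1]$ while each cell $B_j$ has length $1/m$, only the at most $2m+1$ cells $B_j$ meeting $[-1,1]$ can be relevant. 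I would list those indices $j$ in increasing order, relabel them by $-m,\dots,m$ via the unique order-preserving injection, let $L_a$ be the set of $i\notin J$ with $\lambda_i$ in the piece $P_j$ carrying label $a$, and set $L_a=\emptyset$ for any unused label. Then $J$ and $L_{-m},\dots,L_m$ partition $\{1,\dots,n\}$.

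It remains to verify (2) and (3). Property (2) is immediate: two indices in one $L_a$ have their eigenvalues in a common cell $B_j$, an interval of length $1/m$, so the difference is $<1/m$. For (3), take $i\in L_a$ and $j\in L_b$ with $a\ne b$, say $a<b$; order-preservation forces the underlying cell indices to satisfy $j_a\le j_b-1$, so $\lambda_i<l^\ast\varepsilon+j_a/m\le l^\ast\varepsilon+(j_b-1)/m$ while $\lambda_j\ge l^\ast\varepsilon+(j_b-1)/m+\varepsilon$ because $\lambda_j\in P_{j_b}$, whence $\lambda_j-\lambda_i>\varepsilon=1/(km)$. I do not expect a genuine obstacle here: the only steps needing care are the integer-part bookkeeping that makes the slabs over all phases tile $\mathbb{R}$ exactly once — this is what legitimizes the pigeonhole choice of $l^\ast$ — and the elementary count that $[-1,1]$, having length $2$, meets at most $2m+1$ cells of mesh $1/m$, which is exactly why the label set $\{-m,\dots,m\}$ is the right size.
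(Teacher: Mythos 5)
Your proof is correct and is essentially the paper's argument in different bookkeeping: the paper tiles $[-1,1]$ by intervals of mesh $1/(km)$ and discards the sparsest residue class mod $k$ by pigeonhole, which is exactly your choice of phase $l^\ast$ and deletion of the slabs $G_j$ of width $1/(km)$ from cells of mesh $1/m$. The verifications of properties (1)--(3) then coincide, so there is nothing to add.
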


\begin{proof}
Consider the following partition $\{1,\ldots,n\}=\bigcup_{-km}^{km-1} I_j$:
$$
I_j=\l\{l\colon \lambda_l\in \l(\frac{j}{km};\frac{j+1}{km}\r]\r\},\, j=-km+1,\ldots,km-1;\quad 
I_{-km}=\l\{l\colon \lambda_l\in \l[-1;-1+\frac{1}{km}\r]\r\}.
$$
Let us combine $I_j$ with $j\equiv r(\mod\,k)$ into $J_r$:
$$
J_{r}=\bigcup\limits_{a=-m}^{m-1}I_{ak+r},\quad r=0,1,\ldots,k-1.
$$
Obviously, $\bigcup_{r=0}^{k-1} J_r=\{1,\ldots,n\}$. By the Dirichlet principle, there exists 
such $r_0$ that $\#J_{r_0}\le \frac{n}{k}$. We set $J=J_{r_0}$. As for $\{L_a\}$, let
$$
L_a=\bigcup\limits_{(a-1)k+r_0<j<ak+r_0}I_j,\quad a=-m,\ldots,m.
$$
Property 1 follows from the construction of $J=J_{r_0}$. Furthermore, every 
interval $J_r$ consists of $k-1$ subsequent intervals $I_l$, so, if $i,j\in J_r$, then
$$
|\lambda_i-\lambda_j|\le \frac{k-1}{km}< \frac{1}{m},
$$
which implies Property 2. Finally, two intervals $L_a$ and $L_b$, $a\neq b$, are 
separated by one of $I_{ak+r_0}$, and hence Property 3 is true.
\end{proof}

\begin{theorem}
\label{main}
Let 
$$
H_j\in M_{n}(\C), \quad \|H_j\|\le 1, \quad H_j=H_j^*, \ j=1,2.
$$ 
Let $\|[H_1,H_2]\|_{\tr}=\delta\le 1/16$. 
Then there exist $A_1,A_2\in M_n(\C)$ 
such that 
$$
\|A_j\|\le 1, \quad A_j=A_j^*, \quad
\|H_j-A_j\|_{\tr}\le 2\,\delta^{1/4},\ j=1,2,
$$ 
and $[A_1,A_2]=0$. 
In addition, $[H_1,A_1]=0$.
\end{theorem}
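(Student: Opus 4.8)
The plan is to diagonalise $H_1$, chop its spectrum into well-separated clusters via Lemma~\ref{l1}, let $A_1$ be the resulting coarsening of $H_1$ and $A_2$ the compression of $H_2$ to those clusters; the commutator hypothesis then forces the part of $H_2$ that was discarded to be small. Concretely: since $\|\cdot\|$ and $\|\cdot\|_{\tr}$ are unitarily invariant and every conclusion is stable under conjugating $H_1,H_2,A_1,A_2$ by a fixed unitary, I may assume $H_1=\diag(\lambda_1,\dots,\lambda_n)$ with $-1\le\lambda_1\le\dots\le\lambda_n\le1$. Apply Lemma~\ref{l1} to $\lambda_1,\dots,\lambda_n$ with parameters $k,m\in\N$ to be fixed at the end, obtaining $\{1,\dots,n\}=J\cup\bigcup_{a=-m}^m L_a$. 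Let $P_a$ be the orthogonal projection onto $\mathrm{span}\{e_i:i\in L_a\}$; for each nonempty $L_a$ fix $i_a\in L_a$ and put $\mu_a=\lambda_{i_a}$. Define
$$
A_1=\sum_{i\in J}\lambda_i\,e_ie_i^*+\sum_{a=-m}^m\mu_a P_a,\qquad A_2=\sum_{a=-m}^m P_a H_2 P_a,
$$
so $A_1$ is diagonal --- coinciding with $H_1$ on the coordinates of $J$ and equal to the scalar $\mu_a$ on each $\mathrm{ran}\,P_a$ --- while $A_2$ is the block-diagonal compression of $H_2$ onto the $L_a$ (and $0$ on the coordinates of $J$).

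The elementary properties are immediate. Both $A_j$ are self-adjoint; $\|A_1\|\le1$ since its eigenvalues lie among the $\lambda_i\in[-1,1]$, and $\|A_2\|\le1$ since a compression does not increase the operator norm. As $A_1$ is diagonal, $[H_1,A_1]=0$. Finally $A_1$ and $A_2$ are simultaneously block-diagonal with respect to $\C^n=\bigl(\bigoplus_{i\in J}\C e_i\bigr)\oplus\bigoplus_a\mathrm{ran}\,P_a$, and in each summand at least one of them is scalar ($A_2=0$ on $\bigoplus_{i\in J}\C e_i$, and $A_1=\mu_a I$ on $\mathrm{ran}\,P_a$), hence $[A_1,A_2]=0$.

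Now the estimates. The matrix $H_1-A_1$ is diagonal, vanishes on $J$, and on $L_a$ has diagonal entries $\lambda_i-\mu_a$ with $|\lambda_i-\mu_a|<1/m$ by Property~2 of Lemma~\ref{l1}; hence $\|H_1-A_1\|_{\tr}^2\le 1/m^2$. For $H_2$, diagonality of $H_1$ gives $[H_1,H_2]_{ij}=(\lambda_i-\lambda_j)(H_2)_{ij}$, so
$$
\delta^2=\|[H_1,H_2]\|_{\tr}^2=\frac1n\sum_{i,j=1}^n|\lambda_i-\lambda_j|^2\,|(H_2)_{ij}|^2 .
$$
Moreover $(H_2-A_2)_{ij}=(H_2)_{ij}$ unless $i,j$ lie in a common $L_a$, in which case it is $0$. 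Split the surviving entries into those with $i\in L_a,\,j\in L_b,\,a\neq b$ --- for which $|\lambda_i-\lambda_j|\ge1/(km)$ by Property~3, so their total (divided by $n$) is at most $k^2m^2\delta^2$ by the identity above --- and those with $i\in J$ or $j\in J$, whose total is at most $\sum_{i\in J}\|H_2e_i\|^2+\sum_{j\in J}\|H_2e_j\|^2\le2\,\#J$ because $\|H_2\|\le1$. Using $\#J\le n/k$ (Property~1) this yields
$$
\|H_2-A_2\|_{\tr}^2\le k^2m^2\delta^2+\frac2k .
$$

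It remains to choose the parameters, and here is the main point. Take $m=\lceil\tfrac12\delta^{-1/4}\rceil$ and $k=\lceil\delta^{-1/2}\rceil$. Since $\delta\le1/16$ one has $\delta^{-1/4}\ge2$ and $\delta^{-1/2}\ge4$, whence $m,k\ge1$, $\tfrac12\delta^{-1/4}\le m\le\delta^{-1/4}$, and $\delta^{-1/2}\le k\le\tfrac54\delta^{-1/2}$; then $1/m^2\le4\delta^{1/2}$ gives $\|H_1-A_1\|_{\tr}\le2\delta^{1/4}$, and $k^2m^2\delta^2+2/k\le\tfrac{25}{16}\delta^{1/2}+2\delta^{1/2}<4\delta^{1/2}$ gives $\|H_2-A_2\|_{\tr}<2\delta^{1/4}$. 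The real content --- and the step I expect to be the obstacle --- is the bound on $\|H_2-A_2\|_{\tr}$: one has to turn smallness of the commutator into smallness of the off-block-diagonal part of $H_2$, which is exactly what the cluster separation $|\lambda_i-\lambda_j|\ge1/(km)$ of Property~3 provides, while accepting that on the ``exceptional'' indices $J$, where no separation is available, $H_2$ must simply be thrown away --- which is affordable only because $\#J\le n/k$. Balancing the three error contributions $1/m^2$, $k^2m^2\delta^2$ and $1/k$ forces the scaling $k\sim\delta^{-1/2}$, $m\sim\delta^{-1/4}$ and is what produces both the exponent $1/4$ and the constant $2$; tracking the constants through the ceilings (as above) is the only slightly delicate bookkeeping.
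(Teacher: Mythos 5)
Your proof is correct and follows essentially the same route as the paper's: diagonalize $H_1$, apply Lemma~\ref{l1}, coarsen $H_1$ on each cluster and block-compress $H_2$, then bound the off-block part by the commutator identity and the $J$-part by $\#J\le n/k$, with $k\sim\delta^{-1/2}$, $m\sim\delta^{-1/4}$. The only (immaterial) differences are that you take $\mu_a$ to be a representative eigenvalue rather than the midpoint of the cluster and use ceilings instead of floors, and your constant-tracking checks out.
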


\begin{proof}
We can choose such a basis in $\C^n$ that 
$$
H_1=\diag(\lambda_1,\ldots,\lambda_n),
\quad -1\le \lambda_1\le\ldots\le \lambda_n\le 1.
$$ 
Let $k,m\in\N$ be chosen later. 
Consider the corresponding partition 
$\{1,\ldots,n\}=J\cup\bigcup_{a=-m}^m L_a$ from Lemma \ref{l1}. 
We set
$$
A_1=\diag(\mu_1,\ldots,\mu_n),
$$
where $\mu_j=\lambda_j$ for $j\in J$,
and for all $j\in L_a$ with $a$ fixed the number $\mu_j$ 
is the center of the interval of possible values of $\lambda_i$, $i\in L_a$.
Obviously, $\|A_1\|\le 1$. 
Property 2 from Lemma \ref{l1} implies 
$|\lambda_j-\mu_j|\le \frac{1}{2m}$ for all $j$. 
Hence,
\begin{equation}
\label{0}
\|A_1-H_1\|_{\tr}^2=\frac{1}{n}\sum\limits_{j=1}^n |\mu_j-\lambda_j|^2\le \frac{1}{4m^2}.
\end{equation}
In the chosen basis, let $H_2=\{h_{ij}\}_{i,j=1}^n$, so $[H_1,H_2]_{ij}=(\lambda_i-\lambda_j)h_{ij}$. Then
\begin{equation}
\label{1}
\sum\limits_{i,j=1}^n|\lambda_i-\lambda_j|^2|h_{ij}|^2=n\delta^2.
\end{equation}
We construct $A_2=\{a_{ij}\}_{i,j=1}^n$ as following: 
$$
a_{ij}=\begin{cases} h_{ij},&\exists b\colon i,j\in L_b;\\ 0,&\mbox{otherwise}.\end{cases}
$$
$A_2$ is a block diagonal matrix. 
The norm of each block does not exceed $\|H_2\|$, 
because it is a part of $H_2$, and so 
$\|A_2\|\le \|H_2\|\le 1$. 
In each block $A_1$ is scalar, which follows $[A_1,A_2]=0$. 
Also, $A_2=A_2^*$. 
Let us estimate the difference between $A_2$ and $H_2$.
\begin{equation}
\label{2}
n\|A_2-H_2\|^2_{\tr}\le 
\sum\limits_{a\neq b}\sum\limits_{i\in L_a}\sum\limits_{j\in L_b}|h_{ij}|^2+
2\sum\limits_{i\in J}\sum\limits_{j=1}^n|h_{ij}|^2.
\end{equation}
In the second sum we used the fact that $h_{ij}=\overline{h}_{ji}$. 
The first sum can be estimated 
using \eqref{1} and property 3 from Lemma \ref{l1}:
\begin{equation}
\label{3}
\sum\limits_{a\neq b}\sum\limits_{i\in L_a}\sum\limits_{j\in L_b}|h_{ij}|^2
\le k^2 m^2 \sum\limits_{a\neq b}\sum\limits_{i\in L_a}
\sum\limits_{j\in L_b}|\lambda_i-\lambda_j|^2|h_{ij}|^2
\le n\delta^2k^2m^2.
\end{equation}
To estimate the second sum, consider a matrix $\widetilde{H}=\{\tilde{h}_{ij}\}_{i,j=1}^n$,
$$
\tilde{h}_{ij}=
\begin{cases} 
h_{ij},&i\in J;\\ 0,&\mbox{otherwise},
\end{cases}
$$
and a matrix
$$
P = \diag (p_1, \ldots, p_n), \quad
\begin{cases}
p_j = 1,& j \in J,\\
p_j = 0,& j \not\in J .
\end{cases}
$$
Clearly, 
$\widetilde{H}= P H_2$ and $\|\widetilde{H}\|\le \|H_2\| \le 1$. 
Further, 
\begin{equation}
\label{4}
\sum\limits_{i\in J}\sum\limits_{j=1}^n|h_{ij}|^2
= \tr (P H_2^2 P) \le \tr P \|H_2\|^2 \le \# J \le \frac{n}{k}.
\end{equation}
Combining the inequalities \eqref{2} -- \eqref{4}, we obtain
$$
\|A_2-H_2\|_{\tr}^2\le \delta^2 k^2 m^2+\frac{2}{k}.
$$
Finally, we set
$$
k = \left[\frac{2}{\delta^{1/2}}\right],
\quad m = \left[\frac{1}{2\delta^{1/4}}\right].
$$
Then
\beq
\label{a1h1}
\|A_1-H_1\|_{\tr}\le \frac{1}{2m} \le 2\delta^{1/4},
\eeq
and
\beq
\label{a2h2}
\|A_2-H_2\|_{\tr}\le \sqrt{\delta^{1/2} + \frac{2}{k}} 
\le \sqrt{3}\, \delta^{1/4},
\eeq
where we used \eqref{0}, the fact that $2\delta^{1/4} \le 1$,
and the inequality $[x]^{-1} \le 2 x^{-1}$ for $x\ge 1$.
\end{proof}
It is possible to rewrite Theorem 4 from \cite{Gl} in the following form:
\begin{theorem}
\label{multi}
Let $m\ge 3$,
$$
H_j\in M_{n}(\C), \quad \|H_j\|\le 1, \quad H_j=H_j^*, \ j=1,\ldots,m.
$$ 
Let $\|[H_i,H_j]\|_{\tr}\le \delta$, $i,j=1,\ldots,m$, let also 
\beq
\label{delta_cond}
\delta\le \frac{1}{16^{2\cdot4^{m-2}}}.
\eeq
Then there exist $A_i\in M_n(\C)$, $i=1,\ldots,m$, 
such that 
$$
\|A_j\|\le 1, \quad A_j=A_j^*, \quad
\|H_j-A_j\|_{\tr}\le 5\delta^{1/4^{m-1}},\ j=1,\ldots,m,
$$ 
and 
$$
[A_i,A_j]=0,\quad i,j=1,\ldots,m.
$$
In addition, $[H_1,A_1]=0$.
\end{theorem}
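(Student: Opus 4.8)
The plan is to argue by induction on $m$, the role of the statement for $m-1$ operators being played by Theorem \ref{main} when $m=3$ and by Theorem \ref{multi} itself when $m\ge4$ (for $m=2$ Theorem \ref{main} already gives everything, since $2\delta^{1/4}\le 5\delta^{1/4}$ and $16^{-2}=1/256\le 1/16$). For the inductive step one reduces the problem for $H_1,\dots,H_m$ to $m-1$ matrices living inside the spectral blocks of $H_1$, following the construction in the proof of Theorem \ref{main}. Diagonalise $H_1=\diag(\lambda_1,\dots,\lambda_n)$, $-1\le\lambda_1\le\dots\le\lambda_n\le1$, and apply Lemma \ref{l1} with integers $k,\ell$ to be chosen (with $\ell\approx\delta^{-1/4^{m-1}}$ and $k$ a suitable power of $\delta$ exceeding $\delta^{-1/2}$), giving $\{1,\dots,n\}=J\cup\bigcup_{a=-\ell}^{\ell}L_a$. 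Let $A_1=\diag(\mu_j)$, where $\mu_j$ is the centre of the $\lambda$-interval of the block containing $j$ and $\mu_j=\lambda_j$ for $j\in J$; then $\|H_1-A_1\|_{\tr}^2\le 1/(4\ell^2)$ and $[H_1,A_1]=0$, as in \eqref{0}--\eqref{a1h1}. For $p=2,\dots,m$ let $H_p'$ be the truncation of $H_p$ to its block-diagonal part with respect to $\{L_a\}$. Repeating \eqref{2}--\eqref{4} with $\|[H_1,H_p]\|_{\tr}\le\delta$ and using properties 1 and 3 of Lemma \ref{l1}, one gets $\|H_p-H_p'\|_{\tr}^2\le\delta^2k^2\ell^2+2/k$, while $\|H_p'\|\le1$ and $H_p'=(H_p')^*$.

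Now pass to the blocks. For $a\in\{-\ell,\dots,\ell\}$ let $H_p^{(a)}$ be the compression of $H_p$ to the coordinates indexed by $L_a$, a self-adjoint contraction in $M_{n_a}(\C)$ with $n_a=\#L_a$. Since $H_p'$ and $H_q'$ are block-diagonal with the same structure, $[H_p',H_q']$ is block-diagonal with blocks $[H_p^{(a)},H_q^{(a)}]$; writing $[H_p',H_q']=[H_p'-H_p,H_q']+[H_p,H_q'-H_q]+[H_p,H_q]$, using $\|[X,Y]\|_{\tr}\le 2\|X\|_{\tr}$ when $\|Y\|\le1$ (putting the operator norm on the block-truncated factor) and $\|[H_p,H_q]\|_{\tr}\le\delta$, one obtains $\|[H_p',H_q']\|_{\tr}\le 2\|H_p-H_p'\|_{\tr}+2\|H_q-H_q'\|_{\tr}+\delta$. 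Squaring, multiplying by $n$, using $\sum_a n_a\|[H_p^{(a)},H_q^{(a)}]\|_{\tr}^2=n\|[H_p',H_q']\|_{\tr}^2$ and summing over the pairs $p<q$ gives
$$
\sum_{a=-\ell}^{\ell} n_a\,\|[H_p^{(a)},H_q^{(a)}]\|_{\tr}^{2}\ \le\ C_m\,n\,\bigl(\delta^2+\delta^2k^2\ell^2+1/k\bigr)
$$
for every $p<q$, with $C_m$ depending only on $m$ (the left-hand norm now being that of $M_{n_a}(\C)$). This is the heart of the matter, and also its only delicate point: a block with small $n_a$ may still carry a large $\delta_a:=\max_{2\le p<q\le m}\|[H_p^{(a)},H_q^{(a)}]\|_{\tr}$. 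Call a block \emph{bad} if $\delta_a>\delta^{1/4}$; by the displayed bound and Chebyshev's inequality the total dimension of the bad blocks is at most $C_m n(\delta^2+\delta^2k^2\ell^2+1/k)/\delta^{1/2}$, which is negligible for the chosen $k,\ell$. Enlarge $J$ to absorb all bad blocks.

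On each surviving block $\delta_a\le\delta^{1/4}$, and \eqref{delta_cond} is tuned precisely so that $\delta^{1/4}\le 16^{-2\cdot4^{m-3}}$ (and $\le 1/16$ when $m=3$), i.e.\ the $m-1$ matrices $H_2^{(a)},\dots,H_m^{(a)}$ satisfy the hypotheses for $m-1$ operators. The inductive hypothesis gives commuting self-adjoint contractions $A_2^{(a)},\dots,A_m^{(a)}\in M_{n_a}(\C)$ with $\|H_p^{(a)}-A_p^{(a)}\|_{\tr}\le 5\delta_a^{1/4^{m-2}}\le 5\,\delta^{1/4^{m-1}}$. Define $A_p$ ($p\ge2$) to be block-diagonal with blocks $A_p^{(a)}$ on the surviving blocks and $0$ on the enlarged $J$. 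Then $A_p=A_p^*$, $\|A_p\|\le1$, $[A_p,A_q]=0$ for $p,q\ge2$, $[A_1,A_p]=0$ (since $A_1$ is scalar on every block) and $[H_1,A_1]=0$. Since $H_p-A_p$ is the orthogonal sum of $H_p-H_p'$ and the block corrections,
$$
\|H_p-A_p\|_{\tr}^{2}\ \le\ \bigl(\delta^2k^2\ell^2+\tfrac{2}{k}\bigr)\ +\ 25\,\delta^{2/4^{m-1}}\ +\ \frac{1}{n}\sum_{a\ \mathrm{bad}}n_a .
$$
Choosing $k,\ell$ as appropriate powers of $\delta$ makes the first and third terms negligible next to $\delta^{2/4^{m-1}}$; the extremely fast decay demanded of $\delta$ in \eqref{delta_cond} is exactly what lets all the $m$-dependent constants be absorbed, and shrinking the ``bad'' threshold by a fixed factor pushes the middle term strictly below $\bigl(5\delta^{1/4^{m-1}}\bigr)^2$, giving $\|H_p-A_p\|_{\tr}\le 5\,\delta^{1/4^{m-1}}$.

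The step I expect to be the main obstacle is precisely this normalisation mismatch when restricting to the blocks $L_a$: the compressed commutators are controlled only on average, weighted by $n_a/n$, so before any recursion can be applied one must discard the blocks that are simultaneously too small and too far from commuting, and then verify that this loss, compounded with the accumulation of constants through the $m-2$ levels of recursion, still fits beneath the clean bound $5\,\delta^{1/4^{m-1}}$ — which is what forces the doubly-exponential smallness condition \eqref{delta_cond}.
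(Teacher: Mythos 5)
Your plan is a genuinely different route from the paper's: you recurse into the spectral blocks of $H_1$, renormalize the Hilbert--Schmidt norm on each block $L_a$, and invoke the $(m-1)$-operator statement block by block. The obstacle you flag at the end is real, and I do not think your sketch overcomes it; in fact it is where the proof currently fails. Two problems compound. First, for $m\ge4$ the inductive hypothesis gives $\|H_p^{(a)}-A_p^{(a)}\|_{\tr}\le 5\delta_a^{1/4^{m-2}}\le 5\delta^{1/4^{m-1}}$ on good blocks, so your ``middle term'' is already $25\,\delta^{2/4^{m-1}}=\bigl(5\delta^{1/4^{m-1}}\bigr)^2$: the target is saturated before the truncation error $\delta^2k^2\ell^2+2/k$ and the bad-block dimension are added, so the induction with the fixed constant $5$ does not close as written. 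Your remedy, ``shrinking the bad threshold by a fixed factor,'' does not suffice: to gain a definite fraction of the middle term you need the threshold $c\delta^{1/4}$ with $c^{2/4^{m-2}}$ bounded away from $1$, i.e.\ $c$ doubly exponentially small in $m$, and this multiplies the Chebyshev bound on the bad dimension by $c^{-2}$; whether \eqref{delta_cond} can absorb that together with the $\binom{m}{2}$ and the constants from the commutator estimate is precisely the unverified point. Second, the tuning is not even monotone in the direction you indicate: for $m=3$ (where the base case, Theorem~\ref{main}, gives constant $2$, not $5$, so there is room in the middle term) the threshold $\delta^{1/4}$ makes the admissible window for $k$ empty --- one needs $2/k\lesssim\delta^{1/2}\cdot\delta^{1/8}$ and $\delta k\ell\lesssim\delta^{1/4}\cdot\delta^{1/16}$ with $\ell\gtrsim\delta^{-1/16}$, which pin $k$ to $\delta^{-5/8}$ up to constants that do not match --- and one must instead \emph{enlarge} the threshold there. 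So ``choosing $k,\ell$ as appropriate powers of $\delta$'' is doing all the work, and the required choices are $m$-dependent and delicate; as it stands this is a gap, not a routine verification.

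The paper sidesteps the renormalization issue entirely: it never treats the blocks as matrices in $M_{n_a}(\C)$. It applies the two-matrix scheme of Theorem~\ref{main} globally, $m-1$ times, always to full $n\times n$ matrices: first simultaneously to the pairs $(H_1,H_j)$, producing $\widetilde H_1=A_1$ and block truncations $\widetilde H_j$ with $[\widetilde H_1,\widetilde H_j]=0$ and $\|[\widetilde H_i,\widetilde H_j]\|_{\tr}\le 8\delta^{1/4}$; then to the pairs $(\widetilde H_2,\widetilde H_j)$, and so on. The key structural observation is that the scheme preserves common invariant subspaces, so the commutation relations gained at earlier stages survive all later ones. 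Every correction is then measured in the single norm $\|\cdot\|_{\tr}$ of $M_n(\C)$, and the errors simply add: with $\delta_{i+1}=8\delta_i^{1/4}$ one gets $\delta_i\le16\,\delta^{1/4^{i-1}}$, and $\sum_i 2\delta_i^{1/4}\le 4\gamma(1+\gamma^3+\cdots)\le5\gamma$, a rapidly decreasing series dominated by its last term --- which is exactly why the constant $5$ and condition \eqref{delta_cond} come out cleanly. I would either switch to this global iteration or, if you want to keep the block recursion, carry out the bad-block bookkeeping in full for every $m\ge3$, since that is where your argument stands or falls.
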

\begin{proof}
The scheme from Theorem \ref{main} can be applied simultaneously to the pairs $(H_1,H_j)$, $j=2,\ldots,m$. We denote the resulting operators by $\widetilde{H}_i$, $i=1,\ldots,m$. If $\delta\le 1/16$, then, by \eqref{a1h1}--\eqref{a2h2},
$$
\|H_1-\widetilde{H}_1\|_{\tr}\le 2\delta^{1/4};\quad\|H_i-\widetilde{H}_i\|_{\tr}\le \sqrt{3}\delta^{1/4},\quad i=2,\ldots,m.
$$
Let us estimate the commutators of $\widetilde{H}_i$:
$$
\|[\widetilde{H}_i,\widetilde{H}_j]-[H_i,H_j]\|_{\tr}\le 
$$
$$
\le \|(\widetilde{H}_i-H_i)\widetilde{H}_j\|_{\tr}+
\|H_i(\widetilde{H}_j-H_j)\|_{\tr}+\|(H_j-\widetilde{H}_j)H_i\|_{\tr}+\|\widetilde{H}_j(H_i-\widetilde{H}_i)\|_{\tr}\le 4\sqrt{3}\delta^{1/4},
$$
where we again used \eqref{a2h2} and the fact that $\|AB\|_{\tr}\le \|A\|\|B\|_{\tr}$. So,
$$
\|[\widetilde{H}_i,\widetilde{H}_j]\|_{\tr}\le (4\sqrt{3}+\delta^{3/4})\delta^{1/4}\le 8\delta^{1/4}
$$
and
$$
[\widetilde{H}_1,\widetilde{H}_i]=0,\quad i=2,\ldots,m.
$$
The last relation will remain true if we again apply the scheme from Theorem \ref{main} to the pairs $(\widetilde{H}_2,\widetilde{H}_j)$, $j=3,\ldots,m$,
because the scheme preserves common invariant subspaces. Hence, we
can proceed with $m-1$ such iterations and obtain a set of $m$ commuting operators $A_1,\ldots,A_m$. Let us estimate the difference between $A_i$ and $H_i$ and find the conditions on $\delta$.

We denote $\delta$ from the statement of Theorem by $\delta_1$. After $i$-th iteration, $\delta_{i}$ is replaced with $\delta_{i+1}=8\delta_i^{1/4}$. This gives
$$
\delta_{i}=8^{1+1/4+1/16+\ldots+1/4^{i-1}}\delta^{1/4^{i-1}}\le 16\delta^{1/4^{i-1}}.
$$
The sequence $\{\delta_i\}$ is increasing. Condition \eqref{delta_cond} implies $\delta_{m-1}\le 1/16$ and, consequently, $\delta_i\le 1/16$, $i=1,\ldots,m-1.$ We now see that Theorem \ref{main} is applicable on every step.

Finally, consider the difference between $A_i$ and $H_i$. On $i$-th iteration, we "correct" the operators by 
$$
2\delta_i^{1/4}=\frac14\delta_{i+1}\le 4\delta^{1/4^i}.
$$
So,
$$
\|H_i-A_i\|_{\tr}\le 2(\delta_1^{1/4}+\delta_2^{1/4}+\ldots+\delta_{m-1}^{1/4})\le
$$
$$
\le 4(\delta^{1/4^{m-1}}+\delta^{1/4^{m-2}}+\ldots+\delta^{1/4})
\le 4\gamma(1+\gamma^4+\gamma^{16}+\ldots)\le 5\gamma,
$$
where $\gamma=\delta^{1/4^{m-1}}\le1/4$.
\end{proof}

\end{document}